\newtheorem{theorem}{Theorem}[section]
\newtheorem{proposition}{Proposition}[section]
\DeclareMathOperator{\adj}{adj}
\def\env@dmatrix{\hskip -\arraycolsep
  \let\@ifnextchar\new@ifnextchar
  \extrarowheight=2ex
  \array{*\c@MaxMatrixCols{>{\displaystyle}c}}}
\author{
  Ioannis Anagnostides\\
  \texttt{ianagnost@student.ethz.ch}
  \and
  Paolo Penna\\
  \texttt{paolo.penna@inf.ethz.ch}
}
\date{}
\title{A Robust Framework for Analyzing Gradient-Based Dynamics in Bilinear Games}
\begin{document}

\maketitle
\pagenumbering{gobble}

\begin{abstract}
    In this work, we establish a frequency-domain framework for analyzing gradient-based algorithms in linear minimax optimization problems; specifically, our approach is based on the Z-transform, a powerful tool applied in Control Theory and Signal Processing in order to characterize linear discrete-time systems. We employ our framework to obtain the first tight analysis of stability of Optimistic Gradient Descent/Ascent (OGDA), a natural variant of Gradient Descent/Ascent that was shown to exhibit last-iterate convergence in bilinear games by Daskalakis et al. \cite{DBLP:journals/corr/abs-1711-00141}. Importantly, our analysis is considerably simpler and more concise than the existing ones. 
    
    Moreover, building on the intuition of OGDA, we consider a general family of gradient-based algorithms that augment the memory of the optimization through multiple historical steps. We reduce the convergence -- to a saddle-point -- of the dynamics in bilinear games to the stability of a polynomial, for which efficient algorithmic schemes are well-established. As an immediate corollary, we obtain a broad class of algorithms -- that contains OGDA as a special case -- with a last-iterate convergence guarantee to the space of Nash equilibria of the game. 
\end{abstract}

\clearpage

\pagenumbering{arabic}

\section{Introduction}

The fundamental problem of converging to a Nash equilibrium in multi-agent systems has been a topic of prolific research in several fields, including Mathematics, Economics, Algorithmic Game Theory, Optimization \cite{10.2307/j.ctt1r2gkx,1950PNAS...36...48N,sion1958,10.1145/1132516.1132527,10.5555/1296179,10.1007/s10107-004-0552-5} and more recently Machine Learning in the context of Generative Adversarial Networks (GANs) \cite{NIPS2014_5423,arjovsky2017wasserstein} and multi-agent reinforcement learning \cite{10.5555/645527.657296}. In particular, finding a Nash equilibrium in a two-player \emph{zero-sum} game with objective function $f: \mathcal{X} \times \mathcal{Y} \mapsto \mathbb{R} $ is tantamount to computing a pair of strategies $(\mathbf{x}^*, \mathbf{y}^*) \in \mathcal{X} \times \mathcal{Y}$ such that

\begin{equation}
    f(\mathbf{x}^*, \mathbf{y}) \leq f(\mathbf{x}^*, \mathbf{y}^*) \leq f(\mathbf{x}, \mathbf{y}^*), \quad \forall (\mathbf{x}, \mathbf{y}) \in \mathcal{X} \times \mathcal{Y},
\end{equation}
where $\mathcal{X}$ and $\mathcal{Y}$ represent the space of strategies of each player respectively. In words, an equilibrium -- or a \emph{saddle-point} -- is a pair of strategies such that neither player can improve her utility by a unilateral deviation. In this context, the celebrated min-max theorem implies that if $f(\mathbf{x}, \mathbf{y})$ is a convex-concave function, then an equilibrium always exists; yet, it does not inform us on whether natural learning algorithms can converge to a minimax pair with a reasonable amount of computational resources. 

This question has given rise to profound research, commencing from the analysis of fictitious play by J. Robinson \cite{10.2307/1969530}, and leading to the development of the no-regret paradigm \cite{cesa-bianchi_lugosi_2006,10.1007/11503415_42,v008a006}. However, despite the intrinsic appeal of this framework, these results usually suffer from a lack of last-iterate guarantees, given that a regret-based -- or time-average -- analysis cannot distinguish between a self-stabilizing system and one with recurrent cycles. Indeed, it has been recognized that limit cycles persist in broad classes of no-regret schemes, such as Mirror Descent \cite{mertikopoulos2017cycles,palaiopanos2017multiplicative,10.1145/2840728.2840757, doi:10.1137/1.9781611973402.64}. It is important to point out that a time-average guarantee is insufficient for two main reasons. First, practical applications pose considerable impediments, and implementations of such algorithms could potentially require enormous memory resources; e.g., training GANs through fictitious play would necessitate the storage of millions of parameters at each time step \cite{DBLP:conf/iclr/GrnarovaLLH018,DBLP:conf/eccv/GeXCBW18}. Thus, it is strongly motivated to design algorithms that perform the optimization step through a limited time horizon. Moreover, the stability of the system is the primary desideratum from a control-theoretic perspective, and subsequently of fundamental importance within Game Theory which endeavors to analyze and control the behavior of rational agents.  

These shortcomings of traditional learning algorithms -- such as Gradient Descent/Ascent (GDA) -- were tackled by Daskalakis et al. \cite{DBLP:journals/corr/abs-1711-00141} in the context of training GANs. Specifically, the authors established a pointwise convergence guarantee for Optimistic Gradient Descent/Ascent (henceforth OGDA) -- a natural variant of GDA that augments the memory of the optimization with the previously observed gradient -- for bilinear games, i.e. the objective function is linear in the strategies of both players. This result has subsequently engendered a considerable body of work along several directions \cite{mertikopoulos2018optimistic,liang2018interaction,mokhtari2019unified,daskalakis2018lastiterate,daskalakis2018limit}. Our study follows this long line of research and addresses one such question: Can we extend the last-iterate guarantee of OGDA to a broad class of first-order learning algorithms?  

More precisely, building on the intuition of OGDA, we consider a general family of gradient-based dynamics that augment the memory with multiple historical steps. This consideration is strongly motivated by the no-regret literature, where learning through the previously observed costs lies at the heart of the optimization. Naturally, the main challenge resides primarily in the analysis, and elementary techniques appear to be of no use. In this context, our results are established based on the Z-transform, a powerful tool for deriving a frequency-domain representation of linear dynamics. The main contribution of our paper is to reduce the convergence to a Nash equilibrium -- through the considered class of algorithms -- to the stability of a polynomial, for which efficient algorithmic schemes are well-established in Mathematics and Control Theory. We illustrate the power of our framework in the analysis of OGDA, providing a remarkably simple and precise characterization. 

\paragraph{Related Work}

Our work follows the line of research initiated by Daskalakis et al. \cite{DBLP:journals/corr/abs-1711-00141}; their main contribution was showing through an inductive argument that a natural variant of GDA exhibits last-iterate convergence in \emph{unconstrained} bilinear games. Their approach was inspired by a technique previously introduced and applied in the fields of Online Learning and Convex Optimization \cite{NIPS2013_5147,NIPS2015_5763,pmlr-v32-steinhardtb14,wang2018acceleration}, namely \emph{optimism}. More precisely, this technique is mainly employed when the future cost functions are smooth -- or predictable; e.g., they derive from a regularized algorithm such as Follow-The-Regularized-Leader (FTRL). In this case, it has been shown that optimism can substantially accelerate the dynamics. 

Moreover, Liang and Stokes \cite{liang2018interaction} established that the convergence's rate of OGDA is linear, assuming that the matrix of the game is square and non-singular. This result was also obtained by Mokhtari et al. \cite{mokhtari2019unified}, while they additionally extended the last-iterate guarantee of OGDA in convex-concave minimax problems. In a closely related direction, \cite{mokhtari2019unified} also considered a variant of OGDA with slightly perturbed coefficients. Their approach was founded on the observation that OGDA can be examined as an approximate version of the proximal point method \cite{DBLP:journals/jgo/KaplanT98}.

The more challenging \emph{constrained} case has also been recently addressed in a series of works. First, Mertikopoulos et al. \cite{mertikopoulos2018optimistic} established a convergence guarantee for the projected version of OGDA and Extra-Gradient methods through the Variational Inequality framework. Furthermore, the optimistic variant of multiplicative weights was analyzed by Daskalakis and Panageas \cite{daskalakis2018lastiterate}. A no-regret analysis of Optimistic Mirror Descent can be found in \cite{kangarshahi2018lets}. For a characterization of the stationary points of GDA and OGDA beyond convex-concave settings we refer to \cite{daskalakis2018limit}. Finally, several algorithms have been proposed specifically for solving saddle-point problems; we refer to \cite{adolphs2018local,schfer2019competitive,mazumdar2019finding} and references thereof.

\paragraph{Our Contributions} In the first part of our work (\Cref{section:OGDA}), we analyze Optimistic Gradient Descent/Ascent \cite{DBLP:journals/corr/abs-1711-00141} in bilinear games, strengthening and improving prior results along several lines. Specifically, we derive a concise expression for the characteristic equation of the induced dynamical system (\Cref{proposition:characteristic_equation}); subsequently, we obtain the first -- to the best of our knowledge -- tight analysis of stability with respect to the learning rate, providing the exact region of convergence in \Cref{theorem:convergence}. Our characterization also implies the learning rate that leads to the most rapid convergence (see \Cref{subsection:proof_theorem}), a result which is of clear significance in practical implementations of the algorithm. Our analysis is considerably simpler than the existing ones, and is established through a frequency-domain representation of the dynamics. In particular, we apply the Z-transform, a powerful tool employed in Signal Processing and Control Theory to analyze linear discrete-time systems.  

Moreover, we employ the Z-transform in order to study a general class of gradient-based optimization algorithms in bilinear games (\Cref{section:historical_methods}). More precisely, we first identify sufficient conditions under which the limit points of the dynamics are Nash equilibria (\Cref{proposition:general-nash-limit}). Next, we reduce the stability of the dynamics to the stability of a polynomial(\Cref{theorem:reduction}), for which efficient algorithmic schemes are well-documented in the literature of Mathematics and Control Theory. As an immediate consequence, we show that Optimistic Gradient Descent/Ascent is an instance of a broad class of algorithms that exhibit an analogous convergence guarantee. 

Our results strongly supplement the work of \cite{mokhtari2019unified}, where the authors studied a variant of OGDA with slightly perturbed coefficients. The class of algorithms we consider allows for arbitrary coefficients and importantly, incorporates multiple historical gradient steps -- within a certain bounded time horizon. Naturally, historical methods are of great importance within the literature of Game Theory and the no-regret framework, and providing robust paradigms for their analysis constitutes a crucial endeavor. We also believe that our approach could be of independent interest in the realm of Optimization. Our only hypothesis is that the matrix of the game is square and non-singular, a normative assumption (e.g. see \cite{liang2018interaction,mokhtari2019unified}) made to simplify the analysis.     

\section{Preliminaries}

\paragraph{Optimistic Gradient Descent/Ascent} Consider a continuously differentiable function $f: \mathcal{X} \times \mathcal{Y} \mapsto \mathbb{R}$ that represents the objective function of the game, with $f(\mathbf{x}, \mathbf{y})$ the payoff of player $\mathcal{X}$ to player $\mathcal{Y}$ under strategies $\mathbf{x} \in \mathcal{X}$ and $\mathbf{y} \in \mathcal{Y}$ respectively. Throughout this paper, we focus exclusively on the \emph{unconstrained} case, i.e. $\mathcal{X} = \mathbb{R}^n$ and $\mathcal{Y} = \mathbb{R}^m$. Perhaps the most natural optimization algorithm for solving the induced saddle-point problem is by performing simultaneously Gradient Descent on $\mathbf{x}$ and Gradient Ascent on $\mathbf{y}$; formally, if $\eta>0$ denotes some positive constant -- typically referred to as the \emph{learning rate}, GDA can be described as follows:
\begin{equation}
    \label{equation:GDA}
    \begin{split}
    \mathbf{x}_{t} = \mathbf{x}_{t-1} - \eta \nabla_{\mathbf{x}} f(\mathbf{x}_{t-1}, \mathbf{y}_{t-1}), \\
    \mathbf{y}_{t} = \mathbf{y}_{t-1} + \eta \nabla_{\mathbf{y}} f(\mathbf{x}_{t-1}, \mathbf{y}_{t-1}).
    \end{split}
\end{equation}

However, there are very simple examples where the system of equations \eqref{equation:GDA} diverges; for instance, when $f(x, y) = x y$ with $x, y \in \mathbb{R}$ and $(x_0, y_0) \neq (0, 0)$, GDA is known to diverge for any learning rate $\eta > 0$. This inadequacy has motivated \emph{optimistic} variants of GDA that incorporate some prediction on the next iteration's gradient through the regularization term (recall that Gradient Descent can be viewed as an instance of FTRL with Euclidean regularizer \cite{MAL-018}). With OGDA we refer to the optimization variant that arises when the prediction of the next iteration's gradient is simply the previously observed gradient; this yields the following update rules:
\begin{equation}
    \label{equation:OGDA}
    \begin{split}
    \mathbf{x}_{t} = \mathbf{x}_{t-1} - 2 \eta \nabla_{\mathbf{x}} f(\mathbf{x}_{t-1}, \mathbf{y}_{t-1}) + \eta \nabla_{\mathbf{x}} f(\mathbf{x}_{t-2}, \mathbf{y}_{t-2}), \\
    \mathbf{y}_{t} = \mathbf{y}_{t-1} + 2 \eta \nabla_{\mathbf{y}} f(\mathbf{x}_{t-1}, \mathbf{y}_{t-1}) - \eta \nabla_{\mathbf{y}} f(\mathbf{x}_{t-2}, \mathbf{y}_{t-2}).
    \end{split}
\end{equation}

We remark that in our analysis of OGDA (\Cref{section:OGDA}) we allow $\eta$ to take negative values, although this consideration might appear counter-intuitive. In a \emph{bilinear} game the objective function is linear in the strategies of both players, i.e. $f(\mathbf{x}, \mathbf{y}) = \mathbf{x}^T A \mathbf{y}$; here, $A$ denotes an $n \times m$ real matrix which will be referred to as the matrix of the game. We say that the dynamics are \emph{stable} if the strategies of both players converge.

\subsection{The Z-transform}
\label{section:Z}

The purpose of this subsection is to review the Z-transform, and certain of its properties that will be employed in the forthcoming parts of our study. Specifically, this tool is widely applied in Mathematics and Signal Processing in order to convert a discrete-time signal into a frequency-domain representation; one should view it as the discrete-time analog of the Laplace transform \cite{murray1942}. Importantly for this wok, the Z-transform can greatly simplify the analysis and the characterization of discrete-time systems. Indeed, in Control Theory the designer predominantly regulates the system's behavior in the frequency domain; for an introduction to the basic principles of Control Theory we refer to \cite{1453566,Yoon2013}. We should also point out that the Z-transform has numerous applications in Combinatorics, where the \emph{unilateral} expression is known as \emph{ordinary generating function} (OGF) of a sequence; e.g. see \cite{10.5555/1506267,10.5555/1984890}.    

Consider a discrete-time signal $x_n \in \mathbb{R}$; we will assume that $x_n$ is \emph{causal}, that is $x_n = 0$ for every $n < 0$. For causal signals, the Z-transform coincides with the unilateral -- or \emph{one-sided} -- Z-transform, and is defined as 

\begin{equation}
    \label{equation:Z-transform}
    X(z) = \mathcal{Z} \{ x_n \} =  \sum_{n=- \infty}^{+ \infty} x_n z^{-n} = \sum_{n=0}^{+ \infty} x_n z^{-n},
\end{equation}
where we assume that the frequency parameter $z$ resides in the region of convergence:

\begin{equation}
    \text{ROC} = \left\{ z \in \mathbb{C}^* : \left| \sum_{n=0}^{+\infty} x_n z^{-n} \right| < + \infty \right\}.
\end{equation}

In our setting, the dynamics involve multi-dimensional signals that represent the players' strategies throughout the evolution of the game. In this case, the Z-transform is defined by applying the definition \eqref{equation:Z-transform} in each coordinate or dimension of the signal. Moreover, a crucial feature of the -- unilateral -- Z-transform is the following time delay property:

\begin{equation}
    \label{equation:time_delay}
    \mathcal{Z} \{ x_{n+k} \} = z^k \left( X(z) - \sum_{i=0}^{k-1} x_i z^{-i} \right),
\end{equation}
for some $k \in \mathbb{N}$ and $x_0, x_1, \dots, x_{k-1}$ the initial conditions. This property greatly simplifies the analysis of linear discrete-time systems, as it will be illustrated in the forthcoming sections. We also remark the following special cases of \eqref{equation:time_delay} that will be used in the analysis of OGDA:

\begin{itemize}
    \item $\mathcal{Z} \{ x_{n+1} \} = z (X(z) - x_0)$
    \item $\mathcal{Z} \{ x_{n+2} \} = z^2 (X(z) - x_0 - x_1 z^{-1}) $
\end{itemize}

Of course, it is easy to see that these properties also extend for the multi-dimensional definition. In this context, we state a fundamental theorem in Signal Processing that connects the asymptotic behavior of a causal signal with its Z-transform.

\begin{theorem}
    \label{theorem:fundamental_theorem}
Let $x_n$ a causal discrete-time signal with $X(z) = \mathcal{Z}\{ x_n \}$ a rational function; then, $x_n$ converges for $n \to \infty$ if and only if every pole of $X(z)$ resides inside the unit circle\footnote{Recall that the unit circle in the complex plane is defined as $\mathbb{U}_1 = \{ z \in \mathbb{C} : |z| = 1 \}$.}, except from at most one pole at $z=1$.
\end{theorem}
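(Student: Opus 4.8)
The plan is to exploit the hypothesis that $X(z)$ is rational and reduce the asymptotic analysis of $x_n$ to a term-by-term study of its partial fraction expansion. Writing $X(z) = P(z)/Q(z)$ in lowest terms, the poles of $X(z)$ are exactly the roots of $Q$, and since the signal is causal the relevant region of convergence is $|z| > \max_p |p|$. Decomposing $X(z)$ into partial fractions and inverting each summand with the standard causal pairs $\mathcal{Z}\{\binom{n}{j} p^{n-j}\} = z/(z-p)^{j+1}$, I would obtain, for all sufficiently large $n$, a closed exponential-polynomial form
\[
x_n = \sum_{p} P_p(n)\, p^{n},
\]
where the sum ranges over the distinct poles $p$ and $\deg P_p$ is one less than the multiplicity of $p$.

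First I would dispatch the direction that is essentially bookkeeping. For a single mode $n^j p^n$ one has $n^j p^n \to 0$ when $|p| < 1$, while $n^j p^n$ is constant (hence convergent) precisely when $p = 1$ and $j = 0$; in every other case the mode either grows without bound (if $|p| > 1$, or if $|p| = 1$ with $j \ge 1$) or stays bounded but oscillates without limit (if $|p| = 1$, $p \neq 1$, $j = 0$). This already yields the ``if'' direction: if every pole lies strictly inside the unit circle except for at most a simple pole at $z = 1$, then each surviving term is either vanishing or a fixed constant, so $x_n$ converges.

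The converse is where the real content lies, and it is the step I expect to be the main obstacle. Arguing by contraposition, I must show that the presence of a single \emph{bad} pole---one with $|p| > 1$, one on the unit circle with $p \neq 1$, or a repeated pole at $z = 1$---forces $x_n$ to diverge. The difficulty is that a priori several divergent modes could conspire to cancel and produce a convergent sum. I would rule this out with a dominant-mode argument: set $R = \max_p |p|$ over the poles that actually occur, factor out $R^n$, and examine $x_n / R^n$, whose leading behaviour is governed by the poles of maximal modulus $R$, i.e. terms $P_p(n)(p/R)^n$ with $|p/R| = 1$ and pairwise distinct unit-modulus ratios. The crux is a linear-independence (Vandermonde / almost-periodicity) lemma asserting that the sequences $\{\, n^j e^{i n \theta} \,\}$ for distinct angles $\theta$ and distinct degrees $j$ cannot cancel, so their combination does not tend to $0$; consequently $x_n / R^n \not\to 0$. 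If $R > 1$ this makes $x_n$ unbounded, and if $R = 1$ it forces either unboundedness (a repeated unit-modulus pole) or a persistent non-decaying oscillation (a simple pole at some $p \neq 1$), and in all cases $x_n$ fails to converge.

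Finally, I note that the whole statement can alternatively be read through the lens of linear recurrences: since $X(z)$ is rational, $x_n$ eventually satisfies a constant-coefficient linear recurrence whose characteristic polynomial is the denominator $Q$, so the theorem is equivalent to the classical root-location criterion for the stability of such recurrences. Either way, the only substantive work is the independence lemma invoked in the dominant-mode step; the rest is the standard inversion of a rational Z-transform.
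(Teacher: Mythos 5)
The paper does not actually prove this statement: it is quoted as a ``fundamental theorem in Signal Processing'' and delegated to the cited references on the Z-transform, so there is no in-paper argument to compare yours against. Judged on its own, your proposal is the standard textbook proof and its structure is sound: causality makes $X(z)$ a proper rational function, the partial-fraction expansion inverts mode by mode to $x_n=\sum_p P_p(n)p^n$ for all large $n$ (the factor-of-$z$ bookkeeping in the pairs only shifts indices and is harmless), the ``if'' direction is immediate, and you correctly identify that the entire content of the converse is ruling out cancellation between divergent modes. The one place where you would still owe a real argument is the independence lemma itself; it is not hard, but it should be recorded. A clean route: let $d$ be the largest degree appearing among the $P_p$ with $|p|=R$; if $\sum_{|p|=R}P_p(n)(p/R)^n\to 0$, divide by $n^d$ so that only the leading coefficients $a_p$ of the degree-$d$ polynomials survive, and then recover each $a_p$ as the limit of the Ces\`aro averages $\frac{1}{N}\sum_{n<N}(p/R)^{-n}\bigl(\sum_{q}a_q(q/R)^n\bigr)$, using $\frac{1}{N}\sum_{n<N}\mu^n\to 0$ for unit-modulus $\mu\neq 1$; this forces all $a_p=0$, a contradiction. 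With that lemma in place your case analysis ($R>1$ gives unboundedness; $R=1$ with a repeated unit-circle pole gives polynomial growth; $R=1$ with a simple pole at some $p\neq 1$ gives a non-vanishing oscillation, since convergence of $x_n$ would force its coefficient to vanish by the same lemma applied to $x_n-\lim x_n$) is complete and correct.
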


Note that the poles of a rational function in reduced form -- the denominator has no common factors with the nominator -- coincide with the roots of the polynomial in the denominator. As a result, the convergence of the dynamics can be reduced to analyzing the roots of a polynomial. For a comprehensive study on the Z-transform and applications in discrete-time systems we refer to \cite{10.5555/201033,6371274,trove.nla.gov.au/work/34670171} and references thereof.

\paragraph{Transfer Function} Consider a linear discrete-time system that maps an input signal $x_t$ to an output signal $y_t$ through the following difference equation:

\begin{equation}
    a_n y_{t+n} + \dots + a_0 y_{t} = b_m x_{t+m} + \dots + b_0 x_t,
\end{equation}
for some real coefficients $a_0, \dots, a_n, b_0, \dots, b_m$. The transfer function of the system $H(z)$ is defined as 

\begin{equation}
    H(z) = \frac{Y(z)}{X(z)} = \frac{b_0 + b_1 z + \dots + b_m z^m}{a_0 + a_1 z + \dots + a_n z^n}.
\end{equation}
Notice that in the definition of the transfer function, the initial conditions are taken to be zero. 

\paragraph{Notation}

We use lowercase boldface for time domain vectors and capital boldface for frequency domain vectors. We represent with $t$ the discrete time, while subscripts mainly refer to time indices. We use $\mathbf{I}_{k}$ and $\mathbf{0}_{k \times \ell}$ to refer to the identity matrix of size $k \times k$ and the zero matrix of size $k \times \ell$ respectively; when $k = \ell$ we simply write $\mathbf{0}_{k}$ instead of $\mathbf{0}_{k \times k}$. For a matrix $S$, we denote with $|| S ||$ the \emph{spectral norm} of $S$, i.e. the square root of the maximum eigenvalue of $S^T S$. Finally, a polynomial is said to be stable if all of its roots reside within the unit circle in the complex plane.


\section{Analysis of OGDA}
\label{section:OGDA}

In this section, we employ the Z-transform in order to analyze OGDA in bilinear and unconstrained zero-sum games; that is, $f(\mathbf{x}, \mathbf{y}) = \mathbf{x}^T A \mathbf{y}$, $\mathcal{X} = \mathbb{R}^n$, and $\mathcal{Y} = \mathbb{R}^m$. We will allow arbitrary initial conditions, represented with $\mathbf{x}_0, \mathbf{x}_1 \in \mathbb{R}^n$ and $\mathbf{y}_0, \mathbf{y}_1 \in \mathbb{R}^m$ respectively. In this setting, the update rules of OGDA \eqref{equation:OGDA} can be described for any $t \geq 0$ as follows:

\begin{equation}
    \label{equation:OGDA-bilinear}
    \begin{split}
    \mathbf{x}_{t+2} &= \mathbf{x}_{t+1} - 2 \eta A \mathbf{y}_{t+1} + \eta A \mathbf{y}_{t}, \\
    \mathbf{y}_{t+2} &= \mathbf{y}_{t+1} + 2 \eta A^T \mathbf{x}_{t+1} - \eta A^T \mathbf{x}_{t}.
    \end{split}
\end{equation}
These equations can be expressed more concisely in matrix form as 

\begin{equation}
    \label{equation:OGDA-matrix_form}
    \begin{pmatrix}
    \mathbf{x}_{t+2} \\
    \mathbf{y}_{t+2} 
    \end{pmatrix}
    =
    \begin{pmatrix}
    \mathbf{I}_n & -2 \eta A \\
    2 \eta A^T & \mathbf{I}_m
    \end{pmatrix}
    \begin{pmatrix}
    \mathbf{x}_{t+1} \\
    \mathbf{y}_{t+1} 
    \end{pmatrix}
    + 
    \begin{pmatrix}
    \mathbf{0}_n & \eta A \\
    -\eta A^T & \mathbf{0}_m
    \end{pmatrix}
    \begin{pmatrix}
    \mathbf{x}_{t} \\
    \mathbf{y}_{t} 
    \end{pmatrix}.
\end{equation}

\bigbreak
\noindent
In correspondence to the last expression, let us introduce the following matrices:

\begin{equation}
    B =
    \begin{pmatrix}
    \mathbf{I}_n & -2 \eta A \\
    2\eta A^T & \mathbf{I}_m
    \end{pmatrix},
    C = 
    \begin{pmatrix}
    \mathbf{0}_{n} & \eta A \\
    -\eta A^T & \mathbf{0}_m
    \end{pmatrix},
    \mathbf{w}_t = 
    \begin{pmatrix}
    \mathbf{x}_t \\
    \mathbf{y}_t
    \end{pmatrix}.
\end{equation}

\bigbreak
\noindent
With this notation, \Cref{equation:OGDA-matrix_form} can be re-written as

\begin{equation}
    \label{equation:recursion-second_order}
    \mathbf{w}_{t+2} = B \mathbf{w}_{t+1} + C \mathbf{w}_{t}.
\end{equation}

Let us denote with $\mathbf{W}(z) = \mathcal{Z} \{ \mathbf{w}_t \}$ the Z-transform of $\mathbf{w}_t$; if we transfer \Cref{equation:recursion-second_order} in the frequency domain and apply the time delay property \eqref{equation:time_delay} it follows that 

\begin{align*}
    z^2 ( \mathbf{W}(z) - \mathbf{w}_0 - \mathbf{w}_1 z^{-1}) &= z B (\mathbf{W}(z) - \mathbf{w}_0) + C \mathbf{W}(z) \iff \\
    (z^2 \mathbf{I}_{n+m} - z B - C) \mathbf{W}(z) &= (z^2 \mathbf{I}_{n+m} - z B) \mathbf{w}_0  + z \mathbf{w}_1,
\end{align*}
where we assume that $z$ resides in the region of convergence of $\mathbf{W}(z)$. Therefore, it is easy to see that $\mathbf{W}(z)$ can be expressed as a rational function in every dimension; indeed, recall that for a non-singular and square matrix $S$

\begin{equation}
    \label{equation:inverse}
    S^{-1} = \frac{\adj(S)}{\det(S)},
\end{equation}
where if $M_{ij}$ denotes the $(i,j)$-minor of $S$ - the determinant of the matrix that results from removing the $i^{\text{th}}$ row and the $j^{\text{th}}$ column from $S$, $\text{adj}(S)_{ij} = ( (-1)^{i+j} M_{ij} )^T$. As a result, each entry of matrix $\adj(z^2 \mathbf{I}_{n+m} - z B - C)$ is a polynomial of $z$ and hence, $\mathbf{W}(z)$ can be expressed - in each dimension - as the ratio of two polynomials. Note that this property holds for any sequence that satisfies a linear difference equation. Thus, the characteristic equation of the system can be expressed as 

\begin{equation}
    \label{equation:determinant_1}
    \det(z^2 \mathbf{I}_{n+m} - z B - C) =
    \begin{vmatrix}
    (z^2 - z) \mathbf{I}_n & (2z-1) \eta A \\
    - (2z - 1) \eta A^T & (z^2 - z) \mathbf{I}_m
    \end{vmatrix}
    = 0.
\end{equation}
With the assumption that $m = n$, this determinant can be expanded as 

\begin{equation}
    \label{equation:determinant_2}
    \begin{vmatrix}
    (z^2 - z) \mathbf{I}_n & (2z-1) \eta A \\
    - (2z - 1) \eta A^T & (z^2 - z) \mathbf{I}_m
    \end{vmatrix} = 
    \det \left( (z^2 - z)^2 \mathbf{I}_n + (2z-1)^2 \eta^2 A A^T \right),
\end{equation}
where we employed a well-known identity for the determinant of a block matrix. Let $\alpha(z)$ represent the characteristic polynomial of matrix $\eta^2 A A^T$ -- or equivalently $\eta^2 A^T A$; given that $z=1/2$ is not a root of \Cref{equation:determinant_1} it follows that

\begin{equation}
    \eqref{equation:determinant_1} \iff \det\left( \left( \frac{z^2 - z}{2z - 1} \right)^2 \mathbf{I}_n + \eta^2 A A^T \right) = 0 \iff \alpha \left( - \left( \frac{z^2 - z}{2z -1} \right)^2 \right) = 0.
\end{equation}

As a result, we have derived a concise expression for the characteristic equation of the system as a function of the spectrum of matrix $A$ and the learning rate $\eta$. 

\begin{proposition}
    \label{proposition:characteristic_equation}
Consider a square matrix $A$, and some learning rate $\eta$; if $\alpha(z)$ represents the characteristic polynomial of matrix $\eta^2 A A^T$, the characteristic equation of OGDA in bilinear games \eqref{equation:OGDA-bilinear} can be expressed as 

\begin{equation}
    \label{equation:spectrum_transformation}
    \chi(z) = \alpha \left( - \left( \frac{z^2 - z}{2z - 1} \right)^2 \right) = 0.
\end{equation}
\end{proposition}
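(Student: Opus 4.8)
The plan is to follow the frequency-domain reduction already set up in the preceding discussion and to pinpoint exactly where the characteristic polynomial of $\eta^2 A A^T$ emerges. First I would start from the second-order recursion $\mathbf{w}_{t+2} = B \mathbf{w}_{t+1} + C \mathbf{w}_t$ and transfer it into the frequency domain via the time delay property \eqref{equation:time_delay}, collecting terms to arrive at $(z^2 \mathbf{I}_{n+m} - z B - C) \mathbf{W}(z) = (z^2 \mathbf{I}_{n+m} - z B) \mathbf{w}_0 + z \mathbf{w}_1$. Invoking the adjugate formula \eqref{equation:inverse}, each coordinate of $\mathbf{W}(z)$ is then a ratio of polynomials whose denominator is $\det(z^2 \mathbf{I}_{n+m} - z B - C)$; by \Cref{theorem:fundamental_theorem} the asymptotic behaviour of the dynamics is controlled by the roots of this determinant, so the characteristic equation is precisely \eqref{equation:determinant_1}.

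Next I would substitute the explicit block forms of $B$ and $C$ to present the determinant as that of a $2\times 2$ block matrix with diagonal blocks $(z^2 - z)\mathbf{I}_n$ and off-diagonal blocks $\pm(2z-1)\eta A$. The key step is to collapse this block determinant into a single $n \times n$ determinant. Since $m = n$ and the diagonal blocks are scalar multiples of the identity—hence commute with every other block—the identity $\det\begin{pmatrix} P & Q \\ R & S \end{pmatrix} = \det(PS - QR)$, valid whenever the lower-left and lower-right blocks commute, applies and yields \eqref{equation:determinant_2}, namely $\det\left((z^2-z)^2 \mathbf{I}_n + (2z-1)^2 \eta^2 A A^T\right)$. (Here $Q R = -(2z-1)^2 \eta^2 A A^T$; had I reduced instead to $R Q$ I would obtain $A^T A$, with the same characteristic polynomial since $A$ is square, which is why $\alpha$ may be taken with respect to either matrix.)

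Finally I would factor the scalar $(2z-1)^{2n}$ out of this $n \times n$ determinant. This division is legitimate precisely because $z = 1/2$ is not a root of the characteristic equation: at $z = 1/2$ the off-diagonal blocks vanish, the remaining matrix is $-\tfrac{1}{4}\mathbf{I}_{n+m}$, and the determinant equals $(-1/4)^{2n} \neq 0$. Dividing therefore gives $\det\left(\left(\tfrac{z^2-z}{2z-1}\right)^2 \mathbf{I}_n + \eta^2 A A^T\right) = 0$, which states that $-\left(\tfrac{z^2-z}{2z-1}\right)^2$ is an eigenvalue of $\eta^2 A A^T$. Since $\alpha$ is the characteristic polynomial of that matrix, this is equivalent to $\alpha\left(-\left(\tfrac{z^2-z}{2z-1}\right)^2\right) = 0$, as claimed.

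I expect the main obstacle to be the block-determinant reduction—specifically verifying the commutativity hypothesis that makes the Schur-type identity applicable, and tracking the resulting product so that the $+(2z-1)^2 \eta^2 A A^T$ term carries the correct sign—together with the small but necessary bookkeeping confirming that $z = 1/2$ can be safely excluded before the final factorization.
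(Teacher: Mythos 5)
Your proposal follows exactly the same route as the paper: Z-transform of the second-order recursion, identification of the characteristic equation with $\det(z^2\mathbf{I}_{n+m} - zB - C)$, collapse of the block determinant to $\det\left((z^2-z)^2\mathbf{I}_n + (2z-1)^2\eta^2 AA^T\right)$ via the commuting-blocks identity, and division by $(2z-1)^{2n}$ after excluding $z=1/2$. The argument is correct, and in fact slightly more careful than the paper's in two places it leaves implicit: the verification that the determinant at $z=1/2$ equals $(-1/4)^{2n}\neq 0$, and the remark that reducing to $A^TA$ instead of $AA^T$ is immaterial for a square $A$.
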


This expression provides a complete characterization of the stability of the system, as well as the rate of convergence. To be more precise, let $\mathcal{P}$ the set of the roots of $\chi(z) = 0$. For a non-singular matrix $A$ it follows that $1 \notin \mathcal{P}$; hence, we know from \Cref{theorem:fundamental_theorem} that the dynamics converge if and only if $|z| < 1, \forall z \in \mathcal{P}$. Moreover, assuming that the system is stable, the rate of convergence is determined by the root with the largest norm; i.e. if $\lambda$ is the rate of convergence, it follows that $\lambda = \max \{ |z| : z \in \mathcal{P} \}$. We should also remark that the region of convergence of $\mathbf{W}(z)$ can be expressed as

\begin{equation}
    \text{ROC} = \{ z \in \mathbb{C}^{*} : |z| > \max \{ |z'| : z' \in \mathcal{P} \} \}.
\end{equation}

Through this prism, the stability criterion for the system is equivalent to $e^{j \theta} \in \text{ROC}$ -- for any $\theta$. Having established \Cref{proposition:characteristic_equation}, the remainder of the analysis reduces to trivial algebraic calculations (see \Cref{subsection:proof_theorem}). In particular, considering the non-trivial case of $\gamma = ||A|| \neq 0$, we prove the following theorem:

\begin{theorem}
\label{theorem:convergence}
Consider a square and non-singular matrix $A$. For any learning rate $\eta$ such that $|\eta| < 1/(\sqrt{3} \gamma)$ OGDA \eqref{equation:OGDA-bilinear} converges from any initial state, whilst for $\eta$ such that $|\eta| \geq 1/(\sqrt{3} \gamma)$ the behavior of the dynamics is inherently unstable \footnote{The instability is implied in the sense that at least under certain initial states the dynamics will not converge.}.
\end{theorem}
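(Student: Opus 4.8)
The plan is to combine \Cref{proposition:characteristic_equation} with \Cref{theorem:fundamental_theorem}: convergence from every initial state is equivalent to the assertion that every root of $\chi(z)=\alpha\!\left(-\left(\frac{z^2-z}{2z-1}\right)^2\right)$ lies strictly inside the unit circle, recalling that $1\notin\mathcal{P}$ because $A$ is non-singular. First I would observe that $\alpha$ is the characteristic polynomial of $\eta^2 AA^T$, whose roots are exactly $\eta^2\sigma_i^2$, where $\sigma_1,\dots,\sigma_n>0$ are the singular values of $A$. Hence $\chi(z)=0$ factors, over the singular values, into the scalar equations $\left(\frac{z^2-z}{2z-1}\right)^2=-\eta^2\sigma_i^2$, that is $\frac{z^2-z}{2z-1}=\pm j\eta\sigma_i$. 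The two sign branches yield root sets that are complex conjugates of one another, and therefore share the same moduli, so it suffices to analyze a single branch.

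Next, fixing $\mu:=\eta\sigma_i$ and clearing denominators, one branch becomes the quadratic $z^2-(1+2j\mu)z+j\mu=0$. A pleasant simplification occurs here: its discriminant is $(1+2j\mu)^2-4j\mu=1-4\mu^2$, which is \emph{real}. Solving explicitly and computing moduli, I would obtain the dominant root in closed form: $|z_+|^2=\tfrac12\bigl(1+\sqrt{1-4\mu^2}\,\bigr)$ when $|\mu|\le\tfrac12$, and $|z_+|^2=2\mu^2+|\mu|\sqrt{4\mu^2-1}$ when $|\mu|>\tfrac12$. The remaining root is smaller in modulus, since $|z_+|\,|z_-|=|\mu|$ with $|z_-|<|z_+|$, so stability is governed entirely by $|z_+|$.

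I would then locate the boundary. In the regime $|\mu|\le\tfrac12$ we have $|z_+|^2=\tfrac12\bigl(1+\sqrt{1-4\mu^2}\,\bigr)\le 1$, with equality only at $\mu=0$; in the regime $|\mu|>\tfrac12$ the quantity $2\mu^2+|\mu|\sqrt{4\mu^2-1}$ is strictly increasing in $|\mu|$, equals $\tfrac12$ at $|\mu|=\tfrac12$, and first reaches $1$ exactly when $3\mu^2=1$. Thus $|z_+|<1$ precisely for $0<|\mu|<1/\sqrt3$. Since stability demands this simultaneously for every singular value, the binding constraint comes from the largest, $\sigma_{\max}=||A||=\gamma$, and the dynamics converge from every initial state exactly when $|\eta|\gamma<1/\sqrt3$, i.e. $|\eta|<1/(\sqrt3\,\gamma)$.

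Finally, for the instability claim when $|\eta|\ge 1/(\sqrt3\,\gamma)$, the branch associated with $\sigma=\gamma$ satisfies $|\mu|\ge 1/\sqrt3$ and hence contributes a root with $|z_+|\ge 1$; at the threshold this root equals $e^{j\pi/3}\neq 1$. By \Cref{theorem:fundamental_theorem}, a pole on or outside the unit circle (other than at $z=1$) obstructs convergence, and one may choose initial conditions $\mathbf{w}_0,\mathbf{w}_1$ exciting this mode with nonzero residue, so that the iterates fail to converge. I expect the main obstacle to be the middle step---carrying out the two-regime modulus computation cleanly and verifying that $|z_+|$ is monotone up to the unique crossing $3\mu^2=1$; the conjugate-branch reduction and the excitation argument on the instability side are comparatively routine.
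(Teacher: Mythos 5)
Your proposal is correct and follows essentially the same route as the paper's own proof: reduce via \Cref{proposition:characteristic_equation} to the quadratic $z^2-(1+2j\sqrt{\lambda})z+j\sqrt{\lambda}=0$ (one conjugate branch suffices), compute the moduli of its roots in the two regimes $\lambda\le 1/4$ and $\lambda>1/4$, and locate the threshold $\lambda=1/3$, which translates to $|\eta|\gamma<1/\sqrt{3}$. Your substitution $\mu=\eta\sigma_i$ in place of $\lambda=\mu^2$, the product-of-roots shortcut $|z_+|\,|z_-|=|\mu|$, and the observation that the boundary root is $e^{j\pi/3}$ are cosmetic refinements of the same computation.
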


A surprising consequence of this theorem is that the stability of the system and indeed, the rate of convergence depends on $\eta^2$; thus, the dynamics could converge with negative learning rate (see \Cref{fig:learning_rate}), a very counter-intuitive property in the context of Gradient Descent. 

We provide a simple example to illustrate the main implications of \Cref{theorem:convergence}. In particular, we consider the objective function $f(x,y) = x y$ for $x, y \in \mathbb{R}$. We exhibit the qualitative behavior of OGDA and the impact of the learning rate in Figures \ref{fig:learning_rate} and \ref{fig:stability}. 

\begin{figure}[!ht]
    \centering
    \includegraphics[scale=0.45]{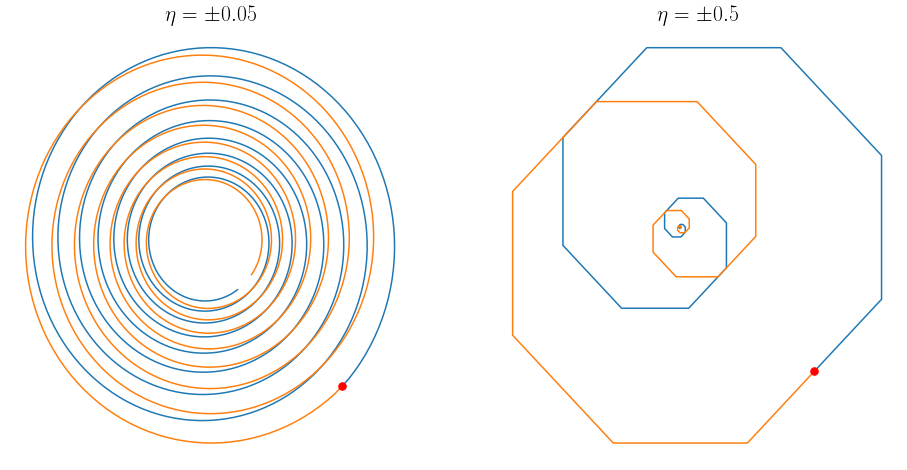}
    \caption{The behavior of the dynamics for different values of learning rate. Note that the blue spiral corresponds to a positive $\eta$, while the orange spiral to the corresponding negative. The red point represents the initial state of the system. It is important to point out that \Cref{proposition:characteristic_equation} implicitly provides the learning rate that leads to the most rapid convergence, establishing an exact characterization of the convergence's rate with respect to $\eta$. In our simple example, the optimal learning rate is $\eta = 0.5$, illustrated in the rightmost image.}
    \label{fig:learning_rate}
\end{figure}

\begin{figure}[!ht]
    \centering
    \includegraphics[scale=0.45]{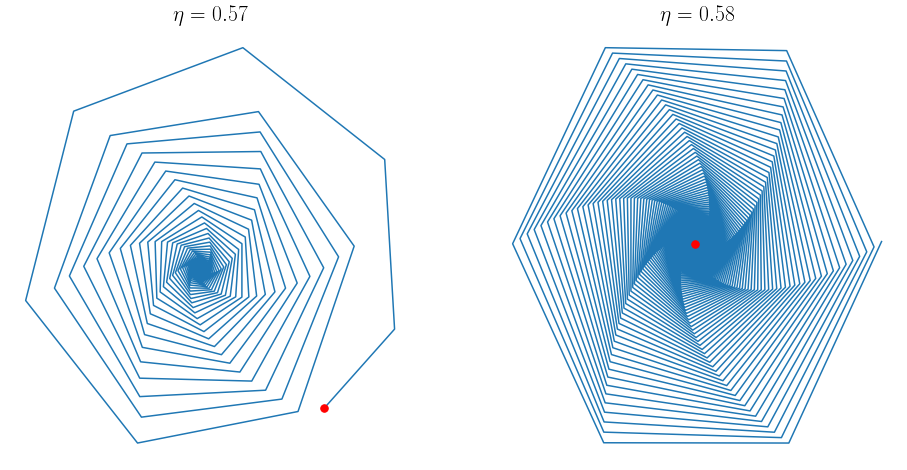}
    \caption{The behavior of the dynamics near the critical value of the learning rate. In particular, according to \Cref{theorem:convergence} the system converges only if $\eta < 1/\sqrt{3} \simeq 0.577$. Thus, we can indeed verify the transition of the system from stability (leftmost image) to divergent behavior (rightmost image). We also remark that the convergence of the system becomes gradually slower as the learning rate exceeds the optimal value of $\eta = 0.5$ and advances towards the critical point.}
    \label{fig:stability}
\end{figure}

Finally, we conclude this section with a characterization of the limit points of OGDA; specifically, it is easy to establish the following claim.

\begin{proposition}
    \label{proposition:nash-limit}
If OGDA converges, the limit points are Nash equilibria \footnote{This proposition posits that $\eta \neq 0$; otherwise, the limit points coincide with the initial state}.
\end{proposition}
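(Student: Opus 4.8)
The plan is to exploit the fact that, for a bilinear objective $f(\mathbf{x},\mathbf{y}) = \mathbf{x}^T A \mathbf{y}$, the saddle-point condition collapses to a pair of \emph{linear} stationarity constraints. First I would characterize the Nash equilibria of the game: a pair $(\mathbf{x}^*, \mathbf{y}^*)$ is a saddle-point if and only if $A \mathbf{y}^* = \mathbf{0}$ and $A^T \mathbf{x}^* = \mathbf{0}$. The reasoning is immediate from linearity; since $f(\mathbf{x}^*, \mathbf{y}) = (\mathbf{x}^*)^T A \mathbf{y}$ is affine in $\mathbf{y}$, the inequality $f(\mathbf{x}^*, \mathbf{y}) \le f(\mathbf{x}^*, \mathbf{y}^*)$ can hold for every $\mathbf{y} \in \mathbb{R}^m$ only when the gradient $A^T \mathbf{x}^*$ vanishes; symmetrically, $f(\mathbf{x}, \mathbf{y}^*) = \mathbf{x}^T A \mathbf{y}^*$ is affine in $\mathbf{x}$, so $f(\mathbf{x}^*, \mathbf{y}^*) \le f(\mathbf{x}, \mathbf{y}^*)$ for all $\mathbf{x}$ forces $A \mathbf{y}^* = \mathbf{0}$. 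Thus the equilibrium set is exactly the kernel condition $\{(\mathbf{x}^*, \mathbf{y}^*) : A \mathbf{y}^* = \mathbf{0}, \; A^T \mathbf{x}^* = \mathbf{0}\}$.

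Next I would pass to the limit in the update rules. Assume the dynamics converge, so that $\mathbf{x}_t \to \mathbf{x}^*$ and $\mathbf{y}_t \to \mathbf{y}^*$ as $t \to \infty$. Taking $t \to \infty$ on both sides of the recursion \eqref{equation:OGDA-bilinear}, every time-shifted iterate tends to the same limit, and the two equations become
\begin{align*}
\mathbf{x}^* &= \mathbf{x}^* - 2\eta A \mathbf{y}^* + \eta A \mathbf{y}^*, \\
\mathbf{y}^* &= \mathbf{y}^* + 2\eta A^T \mathbf{x}^* - \eta A^T \mathbf{x}^*.
\end{align*}
Simplifying, these reduce to $\eta A \mathbf{y}^* = \mathbf{0}$ and $\eta A^T \mathbf{x}^* = \mathbf{0}$. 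Since the proposition assumes $\eta \neq 0$, I may divide through by $\eta$ to obtain precisely $A \mathbf{y}^* = \mathbf{0}$ and $A^T \mathbf{x}^* = \mathbf{0}$, which by the characterization above means that $(\mathbf{x}^*, \mathbf{y}^*)$ is a Nash equilibrium.

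There is no substantive obstacle here; the only point requiring care is the role of the hypothesis $\eta \neq 0$, recorded in the footnote. Indeed, when $\eta = 0$ the recursion degenerates to $\mathbf{w}_{t+2} = \mathbf{w}_{t+1}$, so the fixed-point equations hold vacuously and impose no constraint on $(\mathbf{x}^*, \mathbf{y}^*)$; the iterates simply freeze at the initial state, which need not satisfy the stationarity conditions. The mild subtlety worth flagging is that the argument establishes the claim \emph{conditionally} on convergence, which is exactly the standing assumption of the statement; convergence itself is governed separately by \Cref{theorem:convergence} via the pole condition of \Cref{theorem:fundamental_theorem}, and is not needed to identify the limit once it is known to exist.
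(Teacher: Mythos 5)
Your argument is correct and follows essentially the same route as the paper: characterize Nash equilibria by the kernel conditions $A\mathbf{y}^*=\mathbf{0}$, $A^T\mathbf{x}^*=\mathbf{0}$, then pass to the limit in the recursion \eqref{equation:OGDA-bilinear} so that the gradient terms collapse to $-\eta A\mathbf{y}_\infty=\mathbf{0}$ and $\eta A^T\mathbf{x}_\infty=\mathbf{0}$, and divide by $\eta\neq 0$. The only difference is that you also justify the equilibrium characterization (which the paper merely asserts) and spell out the degenerate $\eta=0$ case; both are harmless additions.
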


\begin{proof}
We first remark that a pair of strategies $(\mathbf{x}^*, \mathbf{y}^*)$ is a Nash equilibrium of the bilinear game if and only if $A \mathbf{y}^* = \mathbf{0}$ and $A^T \mathbf{x}^* = \mathbf{0}$. Assuming that the dynamics converge, we denote with $\mathbf{x}_{\infty} = \lim_{t \to + \infty} \mathbf{x}_t$ and $\mathbf{y}_{\infty} = \lim_{t \to + \infty} \mathbf{y}_t$. It follows from \Cref{equation:OGDA-bilinear} that

\begin{equation}
    \lim_{t \to + \infty} (\mathbf{x}_{t+1} - \mathbf{x}_t) = \mathbf{0} = - 2\eta A \mathbf{y}_{\infty} + \eta A \mathbf{y}_{\infty} \implies A \mathbf{y}_{\infty} = \mathbf{0},
\end{equation}

\begin{equation}
    \lim_{t \to + \infty} (\mathbf{y}_{t+1} - \mathbf{y}_t) = \mathbf{0} = - 2\eta A^T \mathbf{x}_{\infty} + \eta A^T \mathbf{x}_{\infty} \implies A^T \mathbf{x}_{\infty} = \mathbf{0}.
\end{equation}

\end{proof}

We should point that if $A$ is square and non-singular the unique Nash equilibrium of the game is at $(\mathbf{x}^*, \mathbf{y}^*) = (\mathbf{0}, \mathbf{0})$; nonetheless, \Cref{proposition:nash-limit} holds for any arbitrary matrix $A$. In the following section we provide a general characterization of the limit points for a broad class of linear optimization algorithms, extending the result of \Cref{proposition:general-nash-limit}.

\section{Historical Methods}
\label{section:historical_methods}

In this section, we analyze a general class of linear gradient-based optimization algorithms. More precisely, consider some constant parameter $k$ that corresponds to the time horizon of each player. Note that for Gradient Descent/Ascent this parameter would be $k=1$ as the step of each player depends only on the current state of the system, while the optimistic variant we analyzed in the previous section has a $k=2$ time horizon since it incorporates an additional historical gradient. More broadly, we consider the following update rules for $t \geq 0$:

\begin{equation}
    \label{equation:HGDA}
    \begin{split}
    \mathbf{x}_{t+k} = \sum_{i=1}^k p_i \mathbf{x}_{t+k-i} - \eta \sum_{i=1}^k q_i \nabla_{\mathbf{x}} f(\mathbf{x}_{t+k-i}, \mathbf{y}_{t+k-i}), \\
    \mathbf{y}_{t+k} = \sum_{i=1}^k p_i \mathbf{y}_{t+k-i} + \eta \sum_{i=1}^k q_i \nabla_{\mathbf{y}} f(\mathbf{x}_{t+k-i}, \mathbf{y}_{t+k-i}),
    \end{split}
\end{equation}
where $\mathbf{x}_0, \dots, \mathbf{x}_{k-1} \in \mathbb{R}^n$ and $\mathbf{y}_0, \dots, \mathbf{y}_{k-1} \in \mathbb{R}^m$ the initial conditions of the system, and $p_1, \dots, p_k, q_1, \dots q_k \in \mathbb{R}$ some real parameters. These dynamics will be referred to as Historical Gradient Descent/Ascent (henceforth HGDA). Let us elaborate on \Cref{equation:HGDA} for the case of bilinear games. Specifically, one should distinguish the following two main components in the dynamics:

\paragraph{Smoothness Term} The first component of the HGDA dynamics is the smoothness or exploitation term, and determines how the previously followed strategies within the time horizon regularize the forthcoming action. For instance, in OGDA (or indeed GDA) this particular term for the $\mathbf{}$x player would take the form $\mathcal{S} \{ \mathbf{x}_t \} = \mathbf{x}_{t+1} - \mathbf{x}_t$. In general, the smoothness term can be expressed as 

\begin{equation}
    \label{equation:smoothness}
    \mathcal{S} \{ \mathbf{x}_t \} = \mathbf{x}_{t+k} -  p_1 \mathbf{x}_{t+k-1} - \dots - p_k \mathbf{x}_{t}.
\end{equation}
The smoothness term can be captured through the transfer function $S(z)$ of system \eqref{equation:smoothness}; in particular, it follows that 

\begin{equation}
    S(z) = z^k - p_1 z^{k-1} - p_2 z^{k-2} - \dots - p_k.
\end{equation}

\paragraph{Gradient Term} The second component of the HGDA dynamics is the gradient or exploration term, and determines how each player explores the space of strategies through the previously observed gradients. Notice that for bilinear games $\nabla_{\mathbf{x}} f(\mathbf{x}, \mathbf{y}) = A \mathbf{y}$ and $\nabla_{\mathbf{y}} f(\mathbf{x}, \mathbf{y}) = A^T \mathbf{x}$; thus, the gradient term follows after applying a linear transformation to the opponent's previous strategies. Let us focus on the weighted average of the opponent's strategies within the time horizon. For OGDA, this weighted average would be $\mathcal{G} \{ \mathbf{y}_t \} = 2 \mathbf{y}_{t+1} - \mathbf{y}_{t}$; in general, it can be expressed as  

\begin{equation}
    \label{equation:gradient}
    \mathcal{G}\{ \mathbf{y}_t \} = q_1 \mathbf{y}_{t+k-1} + \dots + q_k \mathbf{y}_t.
\end{equation}
\bigbreak
\noindent
Similarly, the transfer function of system \eqref{equation:gradient} can be written as 

\begin{equation}
    G(z) = q_1 z^{k-1} + q_2 z^{k-2} + \dots + q_k. 
\end{equation}
As a result, HGDA in bilinear games obtains the following form:

\begin{equation}
    \label{equation:HGDA-bilinear}
    \begin{split}
    \mathcal{S}\{ \mathbf{x}_t \} = - \eta A \mathcal{G} \{ \mathbf{y}_t \}, \\
    \mathcal{S} \{ \mathbf{y}_t \} =  \eta A^T \mathcal{G} \{ \mathbf{x}_t \}.
    \end{split}
\end{equation}

First, we provide sufficient conditions under which the study of the induced dynamical system \eqref{equation:HGDA-bilinear} is meaningful from a game-theoretic standpoint. More precisely, we prove the following proposition: 

\begin{proposition}
\label{proposition:general-nash-limit}
Consider the HGDA dynamics \eqref{equation:HGDA-bilinear} such that $S(1) = 0$ and $G(1) \neq 0$; then, if the dynamics converge, the limit points are Nash equilibria \footnote{We again assume that $\eta \neq 0$.}.
\end{proposition}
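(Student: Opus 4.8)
The plan is to mirror the argument for OGDA in \Cref{proposition:nash-limit}, exploiting that convergence forces every shifted copy of the sequence to share a common limit. First I would recall that $(\mathbf{x}^*, \mathbf{y}^*)$ is a Nash equilibrium of the bilinear game precisely when $A \mathbf{y}^* = \mathbf{0}$ and $A^T \mathbf{x}^* = \mathbf{0}$, so the entire task reduces to verifying these two annihilation conditions at the limit. Assuming the dynamics converge, I set $\mathbf{x}_\infty = \lim_{t \to \infty} \mathbf{x}_t$ and $\mathbf{y}_\infty = \lim_{t \to \infty} \mathbf{y}_t$; because $\mathbf{x}_{t+k-i} \to \mathbf{x}_\infty$ for every fixed shift $i$, each of the finitely many terms in the smoothness operator $\mathcal{S}$ and the gradient operator $\mathcal{G}$ converges.

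The key step is to take the limit of these operators and observe that they collapse to evaluations of the transfer functions at $z = 1$. Concretely, from \eqref{equation:smoothness} one gets
\begin{equation*}
\lim_{t \to \infty} \mathcal{S}\{\mathbf{x}_t\} = (1 - p_1 - \dots - p_k)\, \mathbf{x}_\infty = S(1)\, \mathbf{x}_\infty,
\end{equation*}
and likewise from \eqref{equation:gradient}
\begin{equation*}
\lim_{t \to \infty} \mathcal{G}\{\mathbf{y}_t\} = (q_1 + \dots + q_k)\, \mathbf{y}_\infty = G(1)\, \mathbf{y}_\infty,
\end{equation*}
with the analogous identities holding for the $\mathbf{y}$-smoothness term and the $\mathbf{x}$-gradient term. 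Invoking the hypothesis $S(1) = 0$ immediately kills the left-hand sides, so both $\lim_{t\to\infty}\mathcal{S}\{\mathbf{x}_t\}$ and $\lim_{t\to\infty}\mathcal{S}\{\mathbf{y}_t\}$ equal $\mathbf{0}$.

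Finally, I would pass to the limit in the two HGDA equations \eqref{equation:HGDA-bilinear}. The first yields $\mathbf{0} = -\eta\, G(1)\, A \mathbf{y}_\infty$ and the second $\mathbf{0} = \eta\, G(1)\, A^T \mathbf{x}_\infty$; since $\eta \neq 0$ and the hypothesis gives $G(1) \neq 0$, I can divide out the scalar $\eta\, G(1)$ to conclude $A \mathbf{y}_\infty = \mathbf{0}$ and $A^T \mathbf{x}_\infty = \mathbf{0}$, which is exactly the Nash condition. There is no genuine obstacle here — the argument is elementary — and the only point requiring care is the precise role of the two hypotheses: $S(1) = 0$ is what guarantees the limit collapses the smoothness term to $\mathbf{0}$ (otherwise the limiting relation would read $S(1)\,\mathbf{x}_\infty = -\eta\, G(1)\, A \mathbf{y}_\infty$ with a nonvanishing left factor), while $G(1) \neq 0$ together with $\eta \neq 0$ is what permits cancellation and extraction of the annihilation conditions rather than a weaker combined identity.
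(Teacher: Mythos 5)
Your argument is correct and follows essentially the same route as the paper's proof: pass to the limit in the smoothness and gradient operators, observe that they collapse to $S(1)\,\mathbf{x}_\infty$ and $G(1)\,\mathbf{y}_\infty$ respectively, and then use $S(1)=0$, $G(1)\neq 0$, and $\eta\neq 0$ to extract $A\mathbf{y}_\infty = \mathbf{0}$ and $A^T\mathbf{x}_\infty = \mathbf{0}$. The paper phrases the evaluations at $z=1$ in terms of the coefficient sums $\sum_i p_i = 1$ and $\sum_i q_i = q_0 \neq 0$, but this is only a notational difference.
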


\begin{proof}
First, we have that 

\begin{equation}
    S(1) = 0 \iff \sum_{i = 1}^k p_i = 1,
\end{equation}

\begin{equation}
    G(1) \neq 0 \iff \sum_{i=1}^k q_i = q_0 \neq 0.
\end{equation}

Recall that a pair of strategies $(\mathbf{x}^*, \mathbf{y}^*)$ is a Nash equilibrium if and only if $A \mathbf{y}^* = \mathbf{0}$ and $A^T \mathbf{x}^* = \mathbf{0}$. Let $\mathbf{x}_{\infty} = \lim_{t \to + \infty} \mathbf{x}_t$ and $\mathbf{y}_{\infty} = \lim_{t \to + \infty} \mathbf{y}_t$; it follows that 

\begin{equation}
    \lim_{t \to + \infty} \mathcal{S} \{ \mathbf{x}_t \} = \mathbf{x}_{\infty} \left( 1 - \sum_{i=1}^k p_i \right) = \mathbf{0},
\end{equation}

\begin{equation}
    \lim_{t \to + \infty} \mathcal{G} \{ \mathbf{y}_t \} = \mathbf{y}_{\infty} \sum_{i=1}^k q_i = q_0 \mathbf{y}_{\infty}.
\end{equation}

Therefore, it follows from \eqref{equation:HGDA-bilinear} that $\mathbf{0} = - \eta q_0 A \mathbf{y}_{\infty} \iff A \mathbf{y}_{\infty} = \mathbf{0}$, since we have assumed that $\eta \neq 0$. Analogously, we can show that $A^T \mathbf{x}_{\infty} = \mathbf{0}$.
\end{proof}

As a result, we have reduced finding the Nash equilibrium to the stability of the dynamical system \eqref{equation:HGDA-bilinear}. It is also easy to establish the following proposition:

\begin{proposition}
    \label{proposition:Z-space_formulation}
    The HGDA dynamics \eqref{equation:HGDA-bilinear} can be transformed in the z-space as
    
    \begin{equation}
        \begin{pmatrix}
        S(z) \mathbf{I}_n & G(z) \eta A \\
        - G(z) \eta A^T & S(z) \mathbf{I}_m
        \end{pmatrix}
        \mathbf{W}(z)
        = 
        \mathcal{Q}(z; \mathbf{w}_0, \mathbf{w}_1, \dots, \mathbf{w}_{k-1}),
    \end{equation}
    where $\mathcal{Q}(z) \in \mathbb{R}^{n+m}$ is a polynomial of $z$ in each dimension that depends on the initial state of the system.
\end{proposition}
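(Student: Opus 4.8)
The plan is to transfer the two scalar relations in \eqref{equation:HGDA-bilinear} into the frequency domain by applying the Z-transform to each side and invoking linearity together with the time delay property \eqref{equation:time_delay}. Writing $\mathbf{X}(z) = \mathcal{Z}\{\mathbf{x}_t\}$ and $\mathbf{Y}(z) = \mathcal{Z}\{\mathbf{y}_t\}$, the first step is to compute $\mathcal{Z}\{\mathcal{S}\{\mathbf{x}_t\}\}$ and $\mathcal{Z}\{\mathcal{G}\{\mathbf{y}_t\}\}$ separately, and likewise with the roles of $\mathbf{x}$ and $\mathbf{y}$ interchanged for the second equation $\mathcal{S}\{\mathbf{y}_t\} = \eta A^T \mathcal{G}\{\mathbf{x}_t\}$.

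For the smoothness operator, since $\mathcal{S}\{\mathbf{x}_t\} = \mathbf{x}_{t+k} - \sum_{i=1}^k p_i \mathbf{x}_{t+k-i}$, applying \eqref{equation:time_delay} to each shifted term gives $\mathcal{Z}\{\mathbf{x}_{t+k-i}\} = z^{k-i}(\mathbf{X}(z) - \sum_{j=0}^{k-i-1}\mathbf{x}_j z^{-j})$. Collecting the factor multiplying $\mathbf{X}(z)$ yields precisely $z^k - \sum_{i=1}^k p_i z^{k-i}$, which is exactly $S(z)$ by definition; the leftover terms depend only on the initial conditions $\mathbf{x}_0, \dots, \mathbf{x}_{k-1}$, and since every surviving power of $z$ is nonnegative, they assemble into a polynomial in $z$. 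An identical calculation for $\mathcal{G}\{\mathbf{y}_t\} = \sum_{i=1}^k q_i \mathbf{y}_{t+k-i}$ shows that the coefficient of $\mathbf{Y}(z)$ collapses to $\sum_{i=1}^k q_i z^{k-i} = G(z)$, so that $\mathcal{Z}\{\mathcal{G}\{\mathbf{y}_t\}\} = G(z)\mathbf{Y}(z) + (\text{polynomial in } z)$.

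Substituting these two identities into the transformed equations and moving every initial-condition polynomial to the right-hand side leaves $S(z)\mathbf{X}(z) + \eta G(z) A\,\mathbf{Y}(z)$ and $-\eta G(z) A^T \mathbf{X}(z) + S(z)\mathbf{Y}(z)$ on the left, which is exactly the asserted block matrix acting on $\mathbf{W}(z) = (\mathbf{X}(z)^T, \mathbf{Y}(z)^T)^T$; the vector $\mathcal{Q}(z)$ then collects the remaining initial-condition polynomials and is a polynomial in $z$ in each coordinate. The argument is essentially bookkeeping, so there is no real obstacle; the only point requiring care is tracking the finitely many initial-condition contributions and checking that each retained exponent of $z$ is nonnegative, which is what guarantees that $\mathcal{Q}(z)$ is a genuine polynomial rather than merely a rational function.
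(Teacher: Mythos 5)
Your proof is correct and takes the same route as the paper, which simply notes that the proposition "follows directly after transferring \eqref{equation:HGDA-bilinear} to the z-space and applying the generalized time delay property \eqref{equation:time_delay}"; you have merely filled in the term-by-term bookkeeping that the paper leaves implicit. The check that the initial-condition residue $-z^{k-i}\sum_{j=0}^{k-i-1}\mathbf{x}_j z^{-j}$ contributes only nonnegative powers of $z$ is exactly the right detail to verify, and it goes through as you say.
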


This proposition follows directly after transferring \eqref{equation:HGDA-bilinear} to the z-space and applying the generalized time delay property \eqref{equation:time_delay}. Moreover, \Cref{proposition:Z-space_formulation} implies that the characteristic equation of HGDA in bilinear games can be expressed as 

\begin{equation}
    \label{equation:characteristic-HGDA}
    \begin{vmatrix}
    S(z) \mathbf{I}_n & G(z) \eta A \\
    - G(z) \eta A^T & S(z) \mathbf{I}_m
    \end{vmatrix}
    = 0.
\end{equation}

It is clear that every common root of $G(z)$ and $S(z)$ will also satisfy \Cref{equation:characteristic-HGDA}, yielding an additional constraint on the structure of the optimization algorithm. In particular, let $\mathcal{P}$ denote the set of all common roots of $G(z)$ and $S(z)$, and consider the following polynomial:

\begin{equation}
    P(z) = \prod_{\rho \in \mathcal{P}} (z - \rho).
\end{equation}

In this context, we proceed with the following factorization: $G(z) = P(z) G'(z)$ and $S(z) = P(z) S'(z)$; notice that the polynomials $G'$ and $S'$ do not have common roots. Applying this factorization yields that 

\begin{equation}
    \label{equation:interm}
    \eqref{equation:characteristic-HGDA} \iff \left( P(z) \right)^{n+m}
    \begin{vmatrix}
    S'(z) \mathbf{I}_n & G'(z) \eta A \\
    - G'(z) \eta A^T & S'(z) \mathbf{I}_m
    \end{vmatrix}
    = 0.
\end{equation}

As a result, we are able to reduce the stability of Historical Gradient Descent/Ascent in bilinear games to the stability of a polynomial, as stated in the following theorem:

\begin{theorem}
    \label{theorem:reduction}
Consider a square and non-singular matrix $A$, and let $x^n + a_1 x^{n-1} + \dots + a_n$ the characteristic polynomial of matrix $\eta^2 A A^T$; then, converging to a Nash equilibrium through HGDA in bilinear games reduces to the stability of the following polynomial:

\begin{equation}
    \label{equation:poly-reduction}
    (-S^2(z))^{n} + a_1 (-S^2(z))^{n-1} (G^2(z)) + \dots + a_n (G^2(z))^{n},
\end{equation}
where $S(z)$ and $G(z)$ are the transfer functions of the smoothness and the gradient term respectively with $S(1) = 0$ and $G(1) \neq 0$.
\end{theorem}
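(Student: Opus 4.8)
\emph{Proof proposal.} The plan is to reduce the block determinant appearing in the characteristic equation \eqref{equation:characteristic-HGDA} to the claimed scalar polynomial, and then to invoke the Z-transform machinery to turn ``stability of that polynomial'' into ``convergence to a Nash equilibrium.'' First I would exploit the hypothesis that $A$ is square, so that $m=n$, together with the fact that the bottom-right block $S(z)\mathbf{I}_n$ is a scalar multiple of the identity and hence commutes with the bottom-left block $-G(z)\eta A^T$. This is precisely the situation in which the block-determinant identity already used in \eqref{equation:determinant_2} applies, collapsing \eqref{equation:characteristic-HGDA} to
\begin{equation*}
\det\left( S^2(z)\mathbf{I}_n + G^2(z)\,\eta^2 A A^T \right) = 0.
\end{equation*}

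Next I would connect this determinant to the characteristic polynomial $\alpha(x) = x^n + a_1 x^{n-1} + \dots + a_n = \det(x\mathbf{I}_n - \eta^2 A A^T)$ of $\eta^2 A A^T$. Writing the determinant as $\prod_j \bigl( S^2(z) + G^2(z)\mu_j \bigr)$ over the eigenvalues $\mu_j$ of $\eta^2 A A^T$, factoring out $G^2(z)$ from each of the $n$ factors, and substituting $x = -S^2(z)/G^2(z)$ yields
\begin{equation*}
\det\left( S^2(z)\mathbf{I}_n + G^2(z)\,\eta^2 A A^T \right) = (-1)^n \left(G^2(z)\right)^n \alpha\!\left( -\frac{S^2(z)}{G^2(z)} \right).
\end{equation*}
Clearing the denominators turns the right-hand side, up to the harmless nonzero constant $(-1)^n$, into exactly the polynomial \eqref{equation:poly-reduction}; hence its roots coincide with the roots of the characteristic equation, i.e.\ with the poles of $\mathbf{W}(z)$. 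I would also note in passing that this is consistent with the common-root factorization \eqref{equation:interm}, since both sides carry the same factor $(P(z))^{2n}$.

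The final and genuinely load-bearing step is to reconcile stability (all roots strictly inside the unit circle) with the conclusion of \Cref{theorem:fundamental_theorem}, which permits a single pole at $z=1$. Here I would evaluate \eqref{equation:poly-reduction} at $z=1$: since $S(1)=0$, every term but the last vanishes, leaving $a_n\left(G^2(1)\right)^n$. Because $A$ is non-singular and $\eta \neq 0$ we have $a_n = \alpha(0) = (-1)^n\det(\eta^2 A A^T) \neq 0$, and by hypothesis $G(1)\neq 0$; therefore $z=1$ is \emph{not} a root of \eqref{equation:poly-reduction}. Consequently the ``at most one pole at $z=1$'' clause of \Cref{theorem:fundamental_theorem} is vacuous, so convergence of $\mathbf{w}_t$ from an arbitrary initial state is equivalent to every root of \eqref{equation:poly-reduction} lying strictly inside the unit circle, i.e.\ to its stability. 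Combining this with \Cref{proposition:general-nash-limit}, which guarantees that whenever the dynamics converge their limit is a Nash equilibrium, completes the reduction.

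I would flag the $z=1$ analysis as the main subtlety. The determinant manipulation is routine and mirrors the OGDA derivation almost verbatim, but the equivalence between ``convergence'' and ``polynomial stability'' rests entirely on excluding the boundary pole at $z=1$, and this is exactly where the non-singularity of $A$ and the conditions $S(1)=0$, $G(1)\neq 0$ enter in an essential way.
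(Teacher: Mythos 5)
Your proposal is correct and follows the same core route as the paper: collapse the block determinant \eqref{equation:characteristic-HGDA} via the commuting-blocks identity, identify the result with the characteristic polynomial $\alpha$ of $\eta^2 A A^T$ evaluated at $-S^2(z)/G^2(z)$, and clear denominators to obtain \eqref{equation:poly-reduction}. Two details differ, both in your favor. First, where you divide by $G^2(z)$, the paper instead factors the common roots of $S$ and $G$ into a polynomial $P$, works with the reduced $S'$, $G'$, and argues separately about the roots of $G'$; your route via the eigenvalue product $\prod_j \bigl(S^2(z) + G^2(z)\mu_j\bigr) = (-1)^n\bigl[(-S^2(z))^n + a_1(-S^2(z))^{n-1}G^2(z) + \dots + a_n (G^2(z))^n\bigr]$ is a genuine polynomial identity, so it sidesteps that case analysis entirely --- you should just state explicitly that the identity, verified off the zero set of $G$, extends to all $z$ because both sides are polynomials. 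Second, your explicit check that $z=1$ is not a root of \eqref{equation:poly-reduction} --- using $S(1)=0$, $G(1)\neq 0$, and $a_n = (-1)^n\det(\eta^2 A A^T)\neq 0$ --- does not appear in the paper's proof; the paper only alludes to it in the subsequent remark on the singular case. Since \Cref{theorem:fundamental_theorem} tolerates a single pole at $z=1$, this check is genuinely needed to equate convergence with \emph{strict} stability of the polynomial, so you have correctly identified (and filled) the one load-bearing step the paper leaves implicit.
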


\begin{proof}
First, we have that 

\begin{equation}
    \label{equation:interm_p}
    \begin{vmatrix}
    S'(z) \mathbf{I}_n & G'(z) \eta A \\
    - G'(z) \eta A^T & S'(z) \mathbf{I}_m
    \end{vmatrix}
    = 
    \det\left( (S'(z))^2 \mathbf{I}_n + (G'(z))^2 \eta^2 A A^T \right).
\end{equation}

Let $\mathcal{R}$ denote the set of roots of $G'(z) = 0$. If $\alpha(z)$ represents the characteristic polynomial of matrix $\eta^2 A A^T$, it follows that for $z \notin \mathcal{R}$

\begin{equation}
    \label{equation:interm_pp}
    \eqref{equation:interm} \iff
    (P(z))^{2n} \alpha \left( - \left( \frac{S'(z)}{G'(z)} \right)^2 \right) = 0.
\end{equation}
Moreover, for $z \notin \mathcal{R}$ we obtain that 

\begin{equation}
    \label{equation:interm_ppp}
    \eqref{equation:interm_pp} \iff
    (-S^2(z))^{n} + a_1 (-S^2(z))^{n-1} (G^2(z)) + \dots + a_n (G^2(z))^{n} = 0.
\end{equation}

Finally, consider some $\rho \in \mathcal{R}$. It is clear that $\rho$ satisfies \eqref{equation:interm} if and only if $\rho \in \mathcal{P}$; as a result, \Cref{equation:interm} is equivalent to \Cref{equation:interm_ppp} without the constraint $z \notin \mathcal{R}$, and the theorem follows directly from \Cref{proposition:general-nash-limit}.
\end{proof}

\paragraph{Singular Case} Our reduction (\Cref{theorem:reduction}) is only applicable for a square and non-singular matrix. Indeed, let us assume that $\det(A) = 0$. It follows that $z=1$ is a (multiple) root of \Cref{equation:poly-reduction} and hence, the induced polynomial is not stable. However, recall from \Cref{theorem:fundamental_theorem} that the dynamics could converge with a factor of $(z-1)$ in the denominator; in fact, this term determines the steady state solution -- or the limit points -- of the dynamics. Yet, handling the singular case appears to be technically challenging in the z-space, as one would have to argue about the multiplicity of the factor $(z-1)$ in the adjoint matrix $\adj(S^2(z) \mathbf{I}_n + G^2(z)\eta^2 A A^T)$ (recall \eqref{equation:inverse}).

\paragraph{Stability of a Polynomial} The problem of characterizing the stability of a polynomial has been thoroughly investigated and understood in Mathematics and Control Theory. First, although deriving closed-form solutions for arbitrary polynomials is impossible (Abel's Theorem), numerical methods yield very accurate approximations \cite{10.1145/321062.321064}; in this context, \Cref{theorem:reduction} also implies the rate of convergence -- up to some small error. Moreover, the literature of Control Theory provides efficient algorithmic schemes for characterizing the stability of an arbitrary polynomial. Most notably for discrete-time systems, we mention the Bistritz criterion and the Jury test. The details of these methods extend beyond the scope of our work; we refer the interested reader to  \cite{1084319,1457261,PREMARATNE1993165,989164} and references thereof. 

\bigbreak
Finally, as a concrete example we consider the set of HGDA algorithms with $S(z) = z(z-1)P(z)$ and $G(z) = (2z-1)P(z)$, where $P(z)$ is a stable polynomial; notice that $S(1) = 0$ and $G(1) \neq 0$, as required by \Cref{proposition:general-nash-limit}. It should be clear that OGDA is an instance of this class for $P(z) = 1$. In this case, the convergence of the dynamics is implied -- for any $\eta < 1/(\sqrt{3}\gamma)$ -- by \Cref{theorem:convergence} and \Cref{theorem:reduction}. As a result, we have identified an infinite class of optimization algorithms with a convergence guarantee to the Nash equilibrium of the game.

\section{Concluding Remarks}
\label{section:conclusion}

The main contribution of our paper is twofold. First, we provided an exact characterization of Optimistic Gradient Descent/Ascent in bilinear games through a remarkably simple analysis. More importantly, we introduced a novel class of optimization algorithms -- namely Historical Gradient Descent/Ascent, and we established an algorithmic procedure for evaluating the stability of the dynamics in bilinear games. Our results have been founded on the Z-transform, a robust tool for characterizing linear discrete-time systems. 

We believe that the stability of OGDA -- and variants thereof -- can be better understood through the lens of Control Theory. In particular, the optimistic term should be construed as a regulator that stabilizes the system. An interesting avenue for future research would be to stabilize Gradient Descent/Ascent via explicit techniques from Control Theory; e.g. the celebrated method of PID control. Moreover, our work has left open the case of a singular or non-square matrix, although this direction may be technically challenging. Finally, our frequency-domain framework can be directly applied to analyze and control continuous-time dynamics through the Laplace transform. 

\bibliography{refs.bib}

\appendix

\section{Omitted Proofs}
\label{section:proofs}







\subsection{Proof of \texorpdfstring{\Cref{theorem:convergence}}{}}
\label{subsection:proof_theorem}

\begin{proof}
Let $\Lambda$ be the set of eigenvalues of matrix $\eta^2 A A^T$ -- or equivalently $\eta^2 A^T A$ -- and let $\lambda \in \Lambda$; notice that $\lambda > 0$, given that $\eta^2 A A^T$ is symmetric and positive definite. \Cref{proposition:characteristic_equation} implies that $\lambda$ induces a root $z$ in the characteristic equation of the system \eqref{equation:OGDA-bilinear} such that 

\begin{equation}
    \left( \frac{z^2 - z}{2z - 1} \right)^2 = - \lambda \iff \frac{z^2 - z}{2z - 1} = \pm \sqrt{\lambda} j,
\end{equation}
where $j$ denotes the imaginary unit. This equation yields the following quadratic equations:

\begin{equation}
    \label{equation:quadratic_1}
    z^2 - z (1 + 2\sqrt{\lambda} j) + \sqrt{\lambda}j = 0,
\end{equation}

\begin{equation}
    \label{equation:quadratic_2}
    z^2 - z (1 - 2\sqrt{\lambda} j) - \sqrt{\lambda}j = 0.
\end{equation}

It is easy to see that a complex number $z$ satisfies \Cref{equation:quadratic_1} if and only if its conjugate $\Bar{z}$ is a solution to \Cref{equation:quadratic_2}; hence, it suffices to restrict our attention to the former quadratic equation, given that the induced solutions have the same norm. In particular, \Cref{equation:quadratic_1} has the following solutions:

\begin{equation}
    z_{1,2}(\lambda) = \frac{1 + 2\sqrt{\lambda} j \pm \sqrt{1 - 4\lambda}}{2}.
\end{equation}
We first consider the case where $\lambda \in \left(0, \frac{1}{4}\right]$; specifically, it follows that 

\begin{equation}
    \label{equation:z_1-1}
    |z_1(\lambda)| = \frac{\sqrt{2 + 2\sqrt{1 - 4\lambda}}}{2},
\end{equation}

\begin{equation}
    |z_2(\lambda)| = \frac{\sqrt{2 - 2\sqrt{1 - 4\lambda}}}{2}.
\end{equation}

As a result, for all $\lambda \in \left(0, \frac{1}{4}\right]$ it holds that $|z_1(\lambda)|, |z_2(\lambda)| < 1$ (see \Cref{fig:complex_norm}). Next, we investigate the case where $\lambda > 1/4$; in particular, we obtain that  

\begin{equation}
    \label{equation:z_1-2}
    |z_1(\lambda)| = \sqrt{2\lambda + \sqrt{\lambda(4\lambda-1)}},
\end{equation}

\begin{equation}
    |z_2(\lambda)| = \sqrt{2\lambda - \sqrt{\lambda(4\lambda-1)}}.
\end{equation}

It is easy to see that $|z_2| < 1$ and hence, the region of convergence is determined by the solutions of the inequality $|z_1(\lambda)| < 1$, for all $\lambda \in \Lambda$. Note that for the $\lambda > 1/4$ that satisfies $|z_1(\lambda)| = 1$ the system oscillates, given that $z_1(\lambda) \neq 1$ ($\iff \lambda \neq 0$). Moreover, it follows that $|z_1(\lambda)| < 1 \iff \lambda < 1/3$ (see \Cref{fig:complex_norm}). As a result, the system is stable if and only if every eigenvalue of $\eta^2 A A^T$ is smaller than $1/3$, or equivalently $\eta^2 \gamma^2 < 1/3$, completing the proof. 
\end{proof}

\paragraph{Optimal Learning Rate} Our analysis directly provides the learning rate that optimizes the rate of convergence. Specifically, let $\lambda_{min} = \min \{\lambda : \lambda \in \Lambda\}$ and $\lambda_{max} = \max \{ \lambda : \lambda \in \Lambda\}$. Note that the rate of convergence depends on the norm of the $z_1(\lambda)$ branch of solutions (the blue curve in \Cref{fig:complex_norm}), since $|z_1(\lambda)| \geq |z_2(\lambda)|, \forall \lambda > 0$. Indeed, the rate of convergence depends on the root with the largest norm. We consider the following two cases: 

\begin{itemize}
    \item Case $\lambda_{min} = \lambda_{max}$ (i.e. all the eigenvalues of $A A^T$ are equal): The optimal learning rate is simply $\eta_{opt} = 1/(2\sqrt{\lambda_{max}}) = 1/(2\gamma)$; for instance, see our example in \Cref{fig:learning_rate}.
    \item Case $\lambda_{min} \neq \lambda_{max}$: Consider that we begin with a very small learning rate, and we gradually increment its value. Then, the solutions $|z_1(\lambda)|$, for $\lambda \in \Lambda$, will progressively traverse the blue curve in \Cref{fig:complex_norm}. It is easy to see that the rate of convergence - i.e. the $\max \{ |z_1(\lambda)| : \lambda \in \Lambda \}$ - will decrease (the dynamics will accelerate), until we reach the learning rate $\eta_{opt}$ that satisfies $|z_1(\eta_{opt}^2 \lambda_{min})| = |z_1(\eta_{opt}^2 \lambda_{max})|$. Naturally, it follows that $\eta_{opt}^2 \lambda_{min} < 1/4$ and $\eta_{opt}^2 \lambda_{max} > 1/4$ and thus, we can obtain the optimal learning rate $\eta_{opt}$ through Equations \eqref{equation:z_1-1} and \eqref{equation:z_1-2}.
\end{itemize}

\begin{figure}[!ht]
    \centering
    \includegraphics[scale=0.45]{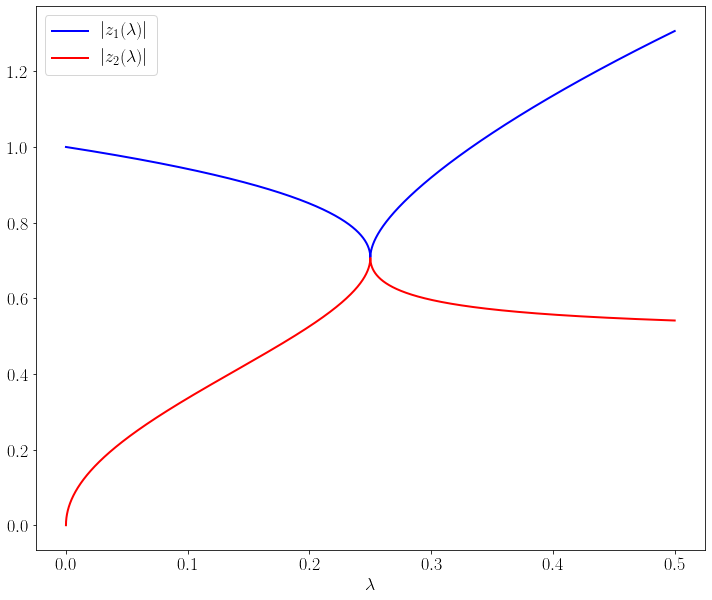}
    \caption{The norm of the solutions of \Cref{equation:quadratic_1} with respect to $\lambda$.}
    \label{fig:complex_norm}
\end{figure}

\end{document}